\def\bc{\begin{center}}
\def\ec{\end{center}}
\def\ig{\includegraphics}
\theoremstyle{theorem}
\newtheorem{proposition}{Proposition}
\theoremstyle{definition}
\newtheorem*{definition*}{Definition}
\newtheorem*{remark*}{Remark}
\newtheorem*{lemma*}{Lemma}
\def\R{\mathbb{R}}
\definecolor{darkgreen}{rgb}{0.0,0.6,0.0}
\begin{document}

\title{A blue sky bifurcation in the dynamics of political candidates}
\markright{Dynamics of political candidates} 
\author{Christoph B\"orgers, Bruce Boghosian, Natasa Dragovic, and \\ Anna Haensch} 

\maketitle

\begin{abstract} Political candidates often shift their positions opportunistically in hopes of capturing more votes.
When there are only two candidates, 
the best strategy for each of them is often to move towards the other. This eventually results
 in  two centrists with coalescing views.  However, the strategy of 
moving towards the other candidate ceases to be optimal when enough voters abstain instead of
voting for a centrist who does not represent their views.
These observations, formalized in  various ways, 
have been made many times. Our own formalization is based on 
differential equations. The surprise and main result derived
from these equations is that the final candidate positions
can jump discontinuously 
as the voters' loyalty towards their candidate wanes.
The underlying mathematical mechanism is a blue sky bifurcation.
\end{abstract}

\section{Introduction.}

Among U.S.\ Presidents of recent decades, Bill Clinton is a prime example of a centrist \cite{Velasco_book}.
Especially after the 1994 midterm elections, which were disastrous for his party,
Clinton's positions moved to the
right. He easily won reelection in 1996. 
Donald Trump, on the other hand, exemplifies the antithesis of a centrist. He took positions during the 2016 campaign that were so far to the right that many 
thought he could not possibly win.  He stuck with those positions, and was elected.

We study a mathematical model, a system of differential equations, that may help understand phenomena of this sort. In constructing our equations we aim at simplicity, and  leave out many aspects of reality. We assume that voter and candidate
positions can be described by a single number, their position on a ``left-right axis," that the views of voters can be described by a simple density, and so on. 
These assumptions are common in the mathematical literature on opinion dynamics.
We believe that mathematicians can contribute 
to the social sciences by analyzing such highly idealized models and deriving insights from that analysis, which may
then be tested in the real world, or even in much more detailed and realistic computer simulations. 

As voters become more inclined to abstain rather than vote for a centrist who does not represent their views, centrism 
will eventually cease to be a politician's best strategy. This point has been made by many previous authors,  formalized in various
different 
ways;  
see for instance \cite{callander_wilson, siegenfeld_bar-yam,Leppel}.   
In our model, the transition sometimes involves a blue sky bifurcation 
\cite{strogatz_book} resulting in a discontinuous jump in optimal candidate positions.
Our analysis indicates that such discontinuities appear when the electorate is polarized, and the
candidates are markedly different in their willingness to shift positions opportunistically.

We know of course that small shifts in the electorate can have large consequences for the {\em outcome} of a tight election. 
What we show here is that small shifts in the electorate can also have large consequences for the {\em optimal strategy} of a political candidate.

\section{Distribution of views among voters.}

\subsection{Density of voter positions.}
We make the simplifying assumption that any individual's political views can be characterized by where they stand on a ``left-right
axis." 
So an individual's political views are characterized by a real number $x$. When that number is  in the
negative range, we call them ``left-wing," and when it is  in the positive range,  ``right-wing." 
We assume that the distribution of views among voters can be characterized by a density $f$,  
so for $a<b$, 
$$
\int_a^b f(x) ~\! dx
$$
represents the fraction of voters whose views fall between $a$ and $b$. Throughout
the paper, we assume $f$ to be ``nice" in the following sense.

\begin{enumerate}
\item $f$ is continuously differentiable, bounded and with a bounded derivative,
\item $f(x)>0$ for all $x \in \R$.
\end{enumerate}
The location on the $x$-axis  labeled  ``$0$" and considered ``the center" is arbitrary. 
We simply make the convention that $0$ is the median voter position, so we make  a third assumption
on $f$: 
\begin{enumerate}
\item[3.] 
$
\int_{-\infty}^0 f(x) dx = \frac{1}{2}.
$
\end{enumerate}
Assumptions 1--3 underlie our discussion in various places, and we will not always state them explicitly from here on.

\subsection{Unimodal and bimodal densities.}
All numerical examples in this paper use one of two representative densities,
a {\em unimodal} one defined by
\begin{equation}
\label{unimodal}
f(x) = \frac{e^{-x^2/2}}{\sqrt{2 \pi}},
\end{equation}
and a {\em bimodal} or {\em polarized} one \cite{Pew} given by
\begin{equation}
\label{bimodal}
f(x) = \frac{e^{-2(x-1)^2} +  e^{-2(x+1)^2} }{\sqrt{2 \pi} }.
\end{equation}
See Figure \ref{fig:PDFCDF}.
\begin{figure}[h!]
\bc
\ig[scale=0.5]{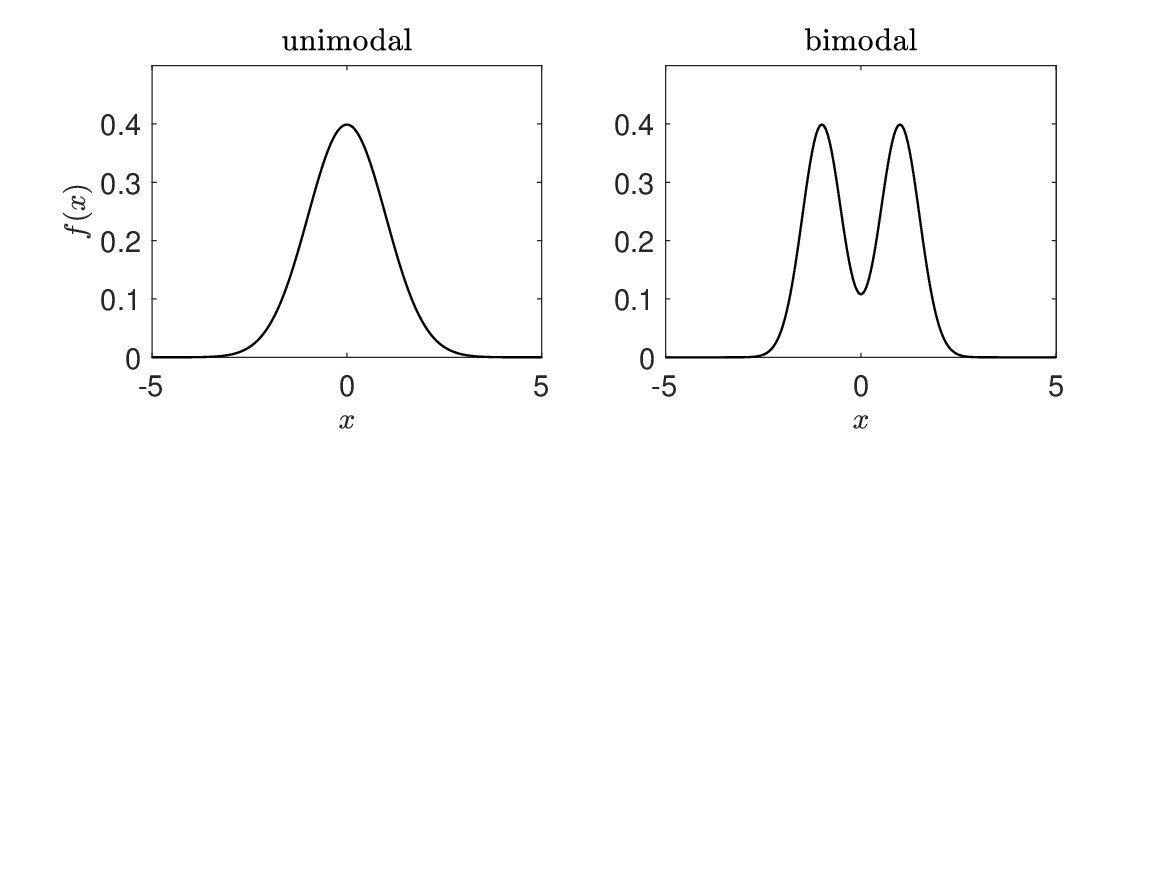}
\caption{Unimodal and bimodal distributions of views among voters.}
\label{fig:PDFCDF}
\ec
\end{figure}

\pagebreak

\section{The well-known benefits of centrism.}
\label{sec:benefits_of_centrism}

\subsection{The median voter theorem.}
We consider two candidates called $L$ and $R$. Their positions on the $x$-axis are
$\ell$ and $r$, with $\ell < r$. We assume that a voter whose position is closer to $\ell$ than to $r$ will vote for $L$, 
and a voter whose position is closer to $r$ than to $\ell$ will vote for $R$.  
We also assume, for now, that everybody votes. The fractions of voters who vote for $L$ and $R$, respectively,  are
then

\begin{equation}
\label{SLSR}
S_L = \int_{-\infty}^{(\ell+r)/2} f(x) dx ~~~\mbox{and}~~~ S_R =  \int_{(\ell+r)/2}^\infty f(x) dx.
\end{equation}

\noindent
\begin{proposition}  \label{prop:whowins} $L$ wins if $(\ell+r)/2>0$, and $R$ wins if $(\ell+r)/2<0$. Equivalently, the 
candidate whose position is closer to $0$ wins.
\end{proposition}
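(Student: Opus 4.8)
The plan is to reduce the claim to the median normalization (Assumption~3) by a one-line manipulation with the midpoint $m=(\ell+r)/2$. Since $f$ is a probability density and $\int_{-\infty}^0 f(x)\,dx=\tfrac12$, we also have $\int_0^\infty f(x)\,dx=\tfrac12$. When $m\ge 0$ I would split $S_L=\int_{-\infty}^{m} f = \int_{-\infty}^0 f + \int_0^m f = \tfrac12 + \int_0^m f$, so the question of whether $S_L>\tfrac12$ is exactly the question of the sign of $\int_0^m f$; when $m<0$ the symmetric split at $0$ gives $S_L=\tfrac12-\int_m^0 f$.

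The key step is then Assumption~2. If $m>0$, the interval $(0,m)$ is nondegenerate and $f>0$ on it, so $\int_0^m f(x)\,dx>0$ and hence $S_L>\tfrac12>S_R$, i.e. $L$ wins. If $m<0$, then $\int_m^0 f(x)\,dx>0$, so $S_L<\tfrac12<S_R$ and $R$ wins. Strict positivity of $f$ is the only hypothesis doing real work here (continuity beyond integrability is not needed), and it is precisely what converts the weak normalization at the median into the strict vote-share inequality that "winning" requires.

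Finally, for the \emph{equivalently} reformulation I would translate the sign of $m$ into a comparison of $|\ell|$ and $|r|$, using $\ell<r$. A short check on the signs of $\ell$ and $r$ suffices: $m>0$ means $\ell+r>0$, i.e. $r>-\ell$; if $\ell<0<r$ this says $|r|>|\ell|$, and if both have the same sign it follows at once from $\ell<r$ — so $m>0\iff|\ell|<|r|$, and symmetrically $m<0\iff|\ell|>|r|$. Thus "$m>0$" is the same as "$L$ is strictly closer to $0$," and likewise for $R$. I do not expect a genuine obstacle; the only points needing a word of care are that the tie case $m=0$ is (rightly) excluded from the statement, and that one should be explicit that "wins" means receiving a strictly larger share, so that the strict inequality obtained from positivity is exactly the conclusion sought.
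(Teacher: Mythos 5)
Your proof is correct and follows essentially the same route as the paper: reduce to the median normalization to get $S_L>\tfrac12$ iff $(\ell+r)/2>0$, then translate the sign of the midpoint into the comparison of $|\ell|$ and $|r|$. The only cosmetic differences are that you justify the strict inequality explicitly via positivity of $f$ (which the paper leaves implicit) and prove the equivalence $m>0\iff|r|>|\ell|$ by a sign case analysis where the paper uses the identity $(r+\ell)(r-\ell)=r^2-\ell^2$.
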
 
\begin{proof}

Since by definition, $0$ is the median of the voter distribution, $S_L>1/2$ if and only if $(\ell+r)/2>0$, and $S_R>1/2$ if and
only if $(\ell+r)/2<0$. Since $r-\ell>0$, 
$$
(\ell+r)/2> 0 ~~\Leftrightarrow ~~
(r+\ell)(r-\ell)>0 ~~\Leftrightarrow ~~ r^2-\ell^2>0 ~~\Leftrightarrow~~ |r|>|\ell|.
$$
Similarly, 
$
(\ell + r)/2 < 0 ~~\Leftrightarrow~ |r| < |\ell|.
$
\end{proof}

Proposition \ref{prop:whowins} is a special case of the {\em median voter theorem} \cite{Duncan_Black}.

\subsection{Hotelling's law: Coalescence of candidate views.}

Proposition \ref{prop:whowins} tells us that the (globally) optimal candidate position is $0$, the median. 
Here we think about how candidates might move gradually to improve their positions.

If both candidates are opportunistic, they might for instance follow a system of ordinary differential equations
such as 
\begin{eqnarray}
\label{dell}
\frac{d \ell}{dt} &=& \alpha \frac{\partial S_L}{\partial \ell} (\ell,r) = ~~~ \frac{\alpha}{2} f \left( \frac{\ell+r}{2} \right), \\
\label{dr}
\frac{d r}{dt} &=& \beta \frac{\partial S_R}{\partial r} (\ell,r) = - \frac{\beta}{2} f \left( \frac{\ell+r}{2} \right), 
\end{eqnarray}
where $\alpha$ and $\beta$ are positive constants.
The equations reflect the assumption that $L$ and $R$ optimize their share using a continuous steepest-ascent 
approach. That is, they always move in the beneficial direction, and they move faster when moving is more beneficial. 
This is a {\em local} optimization procedure.

The constants $\alpha$ and $\beta$ measure the  eagerness with which $L$ and $R$ move opportunistically; if $\alpha>\beta$,
then $L$ is the more eager opportunist. Conversely, if $\beta>\alpha$, $R$ is the more eager opportunist.

Because we assume that $f$ is everywhere strictly 
positive, the graphs of $\ell$ and $r$ meet each other in finite time, transversally, as illustrated in Figure \ref{fig:MEET}.
When this happens, the assumption $\ell < r$ underlying our definitions of $S_L$ and $S_R$ ceases to hold, 
and we assume that $\ell$ and $r$ stop moving at that moment. 

In Figure \ref{fig:MEET}, the functions $\ell(t)$ and $r(t)$ are almost, but not quite, linear. In fact, equations (\ref{dell}) and (\ref{dr}) show that
$\ell$ and $r$ are linear functions of $t$ if and only if $(\ell+r)/2$ is constant, and this in turn is the case if and only if $\alpha=\beta$ (average equations (\ref{dell}) and 
(\ref{dr}) to see this). We remark parenthetically that $\beta \ell + \alpha r$ is always constant (multiply 
 (\ref{dell}) by $\beta$ and (\ref{dr}) by $\alpha$, then sum, to see this), so $\ell$ and $r$ are always linear functions {\em of each other}, assuming
 neither $\alpha$ nor $\beta$ is zero.

{ The coalescence of candidate views demonstrated here is an example of {\em Hotelling's Law} \cite{hotelling}, 
which holds --- in the economic context --- that it is often best for producers to make their products as similar to each other as possible.  Hotelling
\cite{hotelling} mentions Democrats and Republicans as an example, observing that ``each party strives to make its platform as much like the other's as possible." 
Another amusing example from Hotelling's paper: ``Methodist and Presbyterian churches are too much alike."
}
\begin{figure}
\bc
\ig[scale=0.4]{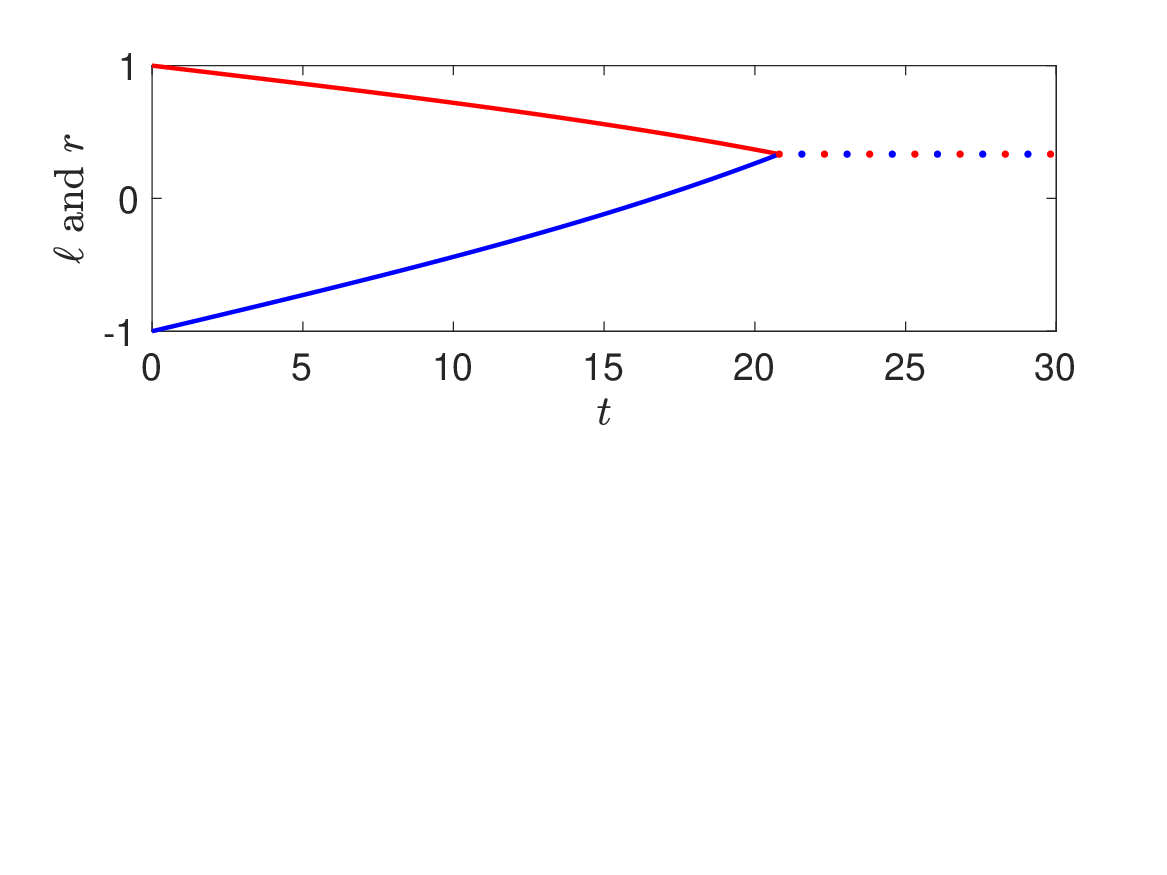}
\caption{$\ell$ and $r$ moving according to equations (\ref{dell}) and (\ref{dr}), with  $\alpha=1$ and $\beta=0.5$.}
\label{fig:MEET}
\ec
\end{figure}
\subsection{Different degrees of opportunism.}
In the example of Figure \ref{fig:MEET},\footnote{All figures in this paper were generated with Matlab code  available by e-mail from the first author.
See \url{https://annahaensch.com/centrism.html} for a 
 Python workbook with code generating similar figures.}
$L$ is the
more eager opportunist: $\alpha = 1> \beta=0.5$. As a result, $(\ell + r)/2$ is an increasing function of time (average
equations (\ref{dell}) and (\ref{dr}) to see this), and  $\ell$ and $r$ meet  to the right of $(\ell(0)+r(0))/2$. The more eager opportunist 
gains more{; this point was also made in \cite{Leppel}.} (Who wins still depends on  initial conditions. For instance, if $r<0$ initially,
then $L$ can never win, no matter how eagerly $L$ moves to the right, since $\ell$ cannot move past $r$.) However,  the less eager opportunist  has greater impact on the position adopted by the other
candidate. 

This may be illustrated by the  April 2022 presidential election in France. Emmanuel Macron
moved towards the right while his leading rival on the right, Marine LePen, moved towards the center. Macron won, but was
also arguably the one whose positions underwent the greater shift \cite{NYTimes}. 

Two months later, in the 
National Assembly elections, Macron's Ensemble coalition fell far short of an absolute majority, losing votes to the
 left-wing NUPES coalition as well as  to the right-wing Rassemblement National. Abstentions were
 at a record number. This leads us to the following modification of our model.

\section{Declining voter loyalty can make centrism sub-optimal.}

\subsection{Voter loyalty.}
Up to now we have assumed that a voter always votes for the candidate whose views are closest to theirs. We use the expression {\em voter loyalty} 
to denote 
a voter's willingness to support the candidate closest in views to them, even if that candidate's views are still far from their own. 
In the political science literature, the  expression {\em voter alienation} is common; it denotes the absence of voter loyalty. 

A decrease in loyalty (increase
in alienation) means an increase in the voters' willingness to abstain when no candidate is close to their views.
This can prevent Hotelling's Law from taking hold. As we have mentioned, this point has been made many times. The idea was actually observed by Hotelling himself in 1929 \cite{hotelling}:

\begin{quote} 
``The increment in B's sales to his more remote customers when he moves nearer them may be more than compensation to him for abandoning some of his nearer business to A. In this case B will definitely and apart from extraneous circumstances choose a location at some distance from A."
\end{quote}

\noindent
We describe the mechanism in the context of our model here.

\subsection{Modeling voter abstentions.} 

If a voter's position is $x$, and $x$ is closer to $\ell$ than to $r$, then the voter will, in our model as described
so far, vote for $L$ rather than
for $R$. However, if $|\ell-x|$ is large, the voter might just stay home and
not vote at all. The share of the vote that $L$ receives might therefore become

\begin{equation}
\label{eq:tired}
S_L = S_L(\ell,r) = 
\int_{-\infty}^{(\ell+r)/2} f(x) g(|\ell-x|) ~\! dx
\end{equation}
where 
$
g: ~ [0,\infty) \rightarrow (0,1]
$
is a decreasing function with $g(0)=1$. Among the voters at $x< (\ell+r)/2$, only a fraction $g(|\ell-x|)$  vote at all. Those who do, of course, still vote for $L$, since $x$ is closer to $\ell$ than to $r$ when $x< (\ell+r)/2$.
Similarly we assume
\begin{equation}
\label{eq:tiredR}
S_R =S_R(\ell,r) =  \int_{(\ell+r)/2}^\infty f(x) g(|r-x|) ~\! dx.
\end{equation}
It might make sense to make the function $g$ in (\ref{eq:tiredR}) different from that in (\ref{eq:tired}), since right-wing voters might
exhibit more, or less, loyalty than left-wing voters. However, to keep things as simple
as possible, we won't do that here. 

We will assume $g$ to be ``nice" in the following sense.
\begin{enumerate}
\item $g$ is of the form 
$$
g(z) =g_0 \left( \frac{z}{\gamma} \right), ~~~z \geq 0,
$$
where $g_0$ is a fixed continuous function, and $\gamma>0$ a parameter,
\item $g_0(z)$, $z \geq 0$, is decreasing, with $g_0(0)=1$, 
\item $g_0(z) \leq O  (1/z^2)$ as $z \rightarrow \infty$, 
\item $\int_0^\infty g_0(z) dz= 1$.
\end{enumerate}
The third assumption implies that $\int_0^\infty g_0(z) dz< \infty$. The fourth assumption, which
will simplify our notation a little bit later on, is then just 
a matter of normalization. If $\int_0^\infty g_0(z)  dz = C$ with $0 < C < \infty$, then $\int_0^\infty g_0(Cz) ~\! dz = 1$, so 
we could simply replace $g_0(z)$ by $g_0(Cz)$ to satisfy our fourth assumption.
We will use these assumptions from here on without always stating them explicitly. 

We think of $\gamma$ 
as a measure of voter loyalty. Greater $\gamma$ means 
greater loyalty. For a distance between the candidate and voter positions to cause
a large amount of voter disloyalty,  it must at least be on the order of $\gamma$. 
In our numerical examples, we will always use 
$$
g_0(z) = e^{-z}.
$$

\subsection{Candidate dynamics with voter abstentions.} As in Section \ref{sec:benefits_of_centrism}, we assume that $r$ and $\ell$ are functions of time, following the equations
\begin{eqnarray}
\label{eq:ODE1}
\frac{d \ell}{dt} &=& \alpha \frac{\partial S_L}{\partial \ell}(\ell,r), \\
\label{eq:ODE2}
\frac{dr}{dt} &=& \beta \frac{\partial S_R}{\partial r}(\ell, r).
\end{eqnarray}
(Compare equations (\ref{dell}) and (\ref{dr}).) We work out the partial derivatives on the right-hand side explicitly. 

\begin{lemma*} {\em Assume $S_L$ and $S_R$ to be defined as in equations (\ref{eq:tired}) and (\ref{eq:tiredR}). Then
for all $\ell$ and $r$ with $\ell<r$, 
$$
\frac{\partial S_L}{\partial \ell}(\ell,r) = - \frac{1}{2} f \left( \frac{\ell+r}{2} \right) g \left( \frac{r-\ell}{2} \right) 
\hskip 120pt
$$
\begin{equation}
\label{eq:partial_S_L_partial_ell}
\hskip 100pt
+ ~
\int_{-\infty}^{(\ell+r)/2} f'(x) g(|\ell-x|) dx
\end{equation}

\begin{equation}
\label{eq:partial_S_L_partial_ell0}
 = - \frac{1}{2} f \left( \frac{\ell+r}{2} \right) g_0 \left( \frac{r-\ell}{2 \gamma} \right)  + ~
\gamma \int_{-(r-\ell)/(2 \gamma)}^\infty f'(\ell-\gamma u) g_0(|u|) du
\end{equation}
and 
$$
\frac{\partial S_R}{\partial r}(\ell,r) =  \frac{1}{2} f \left( \frac{\ell+r}{2} \right) g \left( \frac{r-\ell}{2} \right) 
\hskip 120pt
$$
\begin{equation}
\label{eq:partial_S_R_partial_r}
\hskip 100pt
+  ~
\int_{(\ell+r)/2}^\infty f'(x) g(|r-x|) dx
\end{equation}
\begin{equation}
\label{eq:partial_S_R_partial_r0}
= \frac{1}{2} f \left( \frac{\ell+r}{2} \right) g_0 \left( \frac{r-\ell}{2 \gamma} \right)  
+  ~
\gamma \int_{-(r-\ell)/(2 \gamma)}^\infty f'(r+\gamma u) g_0(|u|) du.
\end{equation}
}
\end{lemma*}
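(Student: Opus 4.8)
The plan is to differentiate the integral defining $S_L$ in equation (\ref{eq:tired}) with respect to $\ell$, being careful about the two places $\ell$ appears: in the upper limit of integration $(\ell+r)/2$, and in the integrand through the factor $g(|\ell-x|)$. First I would split the region of integration, observing that for $x<(\ell+r)/2$ and $\ell<r$ we always have $x<\ell$ is \emph{not} guaranteed, so $|\ell-x|$ is not simply $\ell-x$; rather, the integration range $(-\infty,(\ell+r)/2)$ contains the point $x=\ell$, and on $(-\infty,\ell)$ we have $|\ell-x|=\ell-x$ while on $(\ell,(\ell+r)/2)$ we have $|\ell-x|=x-\ell$. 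This suggests writing
\begin{equation*}
S_L=\int_{-\infty}^{\ell} f(x)g(\ell-x)\,dx+\int_{\ell}^{(\ell+r)/2} f(x)g(x-\ell)\,dx,
\end{equation*}
and then applying the Leibniz integral rule to each piece separately.

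Carrying out the differentiation: in the first integral, the upper limit contributes $f(\ell)g(0)=f(\ell)$ and the integrand contributes $\int_{-\infty}^\ell f(x)g'(\ell-x)\,dx$; an integration by parts (using $g(0)=1$ and the decay assumption 3 on $g_0$ to kill the boundary term at $-\infty$) turns this into $-f(\ell)+\int_{-\infty}^\ell f'(x)g(\ell-x)\,dx$, so the two $f(\ell)$ terms cancel. In the second integral, the lower limit $\ell$ contributes $-f(\ell)g(0)=-f(\ell)$, the upper limit $(\ell+r)/2$ contributes $\tfrac12 f((\ell+r)/2)g((r-\ell)/2)$, and the integrand contributes $-\int_\ell^{(\ell+r)/2} f(x)g'(x-\ell)\,dx$, which by parts (boundary terms at $x=\ell$ giving $f(\ell)$ and at $x=(\ell+r)/2$ giving $-f((\ell+r)/2)g((r-\ell)/2)$) becomes $f(\ell)-f((\ell+r)/2)g((r-\ell)/2)+\int_\ell^{(\ell+r)/2} f'(x)g(x-\ell)\,dx$. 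Summing everything, the $f(\ell)$ terms cancel, one copy of $\tfrac12 f((\ell+r)/2)g((r-\ell)/2)$ survives, and the two $f'$-integrals recombine into $\int_{-\infty}^{(\ell+r)/2} f'(x)g(|\ell-x|)\,dx$, giving (\ref{eq:partial_S_L_partial_ell}). Formula (\ref{eq:partial_S_L_partial_ell0}) then follows by the substitution $x=\ell-\gamma u$, so that $dx=-\gamma\,du$, $g(|\ell-x|)=g_0(|u|)$, and the limits $x\in(-\infty,(\ell+r)/2)$ become $u\in(-(r-\ell)/(2\gamma),\infty)$. The formulas (\ref{eq:partial_S_R_partial_r}) and (\ref{eq:partial_S_R_partial_r0}) for $\partial S_R/\partial r$ are obtained by the identical argument applied to (\ref{eq:tiredR}), or more slickly by the symmetry $x\mapsto -x$, $\ell\mapsto -r$, $r\mapsto -\ell$.

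The main obstacle is purely one of bookkeeping rather than of genuine mathematical difficulty: one must justify differentiating under the integral sign over an unbounded domain, and keep careful track of the sign changes coming from $|\ell-x|$ and from the several boundary terms, all of which conspire to cancel. The decay hypothesis $g_0(z)=O(1/z^2)$ together with boundedness of $f$ and $f'$ guarantees that all the integrals converge absolutely and that the dominated-convergence hypotheses for differentiating under the integral sign are met, so the formal manipulation above is rigorous; I would state this justification briefly rather than belabor it. A quick sanity check: setting $g\equiv 1$ (so $g'\equiv 0$ and the integrals of $f'$ telescope to $f((\ell+r)/2)$ at the upper limit), formula (\ref{eq:partial_S_L_partial_ell}) collapses to $-\tfrac12 f((\ell+r)/2)+\tfrac12 f((\ell+r)/2)+\tfrac12 f((\ell+r)/2)$... one must instead note that with $g\equiv1$, $S_L$ is as in (\ref{SLSR}) and $\partial S_L/\partial\ell=\tfrac12 f((\ell+r)/2)$, consistent with (\ref{dell}); indeed in that case the $f'$-integral equals $f((\ell+r)/2)$ minus the (vanishing) contribution at $-\infty$, and $-\tfrac12 f((\ell+r)/2)+f((\ell+r)/2)=\tfrac12 f((\ell+r)/2)$, as required.
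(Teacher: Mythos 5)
Your bookkeeping is correct: the split at $x=\ell$, the cancellation of the $f(\ell)$ boundary terms, the survival of $-\tfrac12 f\bigl(\tfrac{\ell+r}{2}\bigr)g\bigl(\tfrac{r-\ell}{2}\bigr)$, the substitution $x=\ell-\gamma u$, and the symmetry argument for $S_R$ all check out, and the sanity check against equation (\ref{dell}) is apt. However, there is a genuine gap relative to the lemma as stated: your argument differentiates $g$. Both the differentiation under the integral sign (producing $\int f(x)\,g'(\ell-x)\,dx$) and the subsequent integration by parts require $g_0$ to be differentiable --- really absolutely continuous with a controlled derivative --- whereas the paper's standing assumptions ask only that $g_0$ be continuous, decreasing, normalized, and $O(1/z^2)$ at infinity. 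A continuous decreasing $g_0$ can have corners, or can even be singular in the sense that $g_0(b)-g_0(a)\neq\int_a^b g_0'$ despite $g_0'$ existing almost everywhere; in either case the intermediate expressions involving $g'$ are undefined or the fundamental theorem of calculus step in your integration by parts fails. So as written your argument proves the lemma only under an additional smoothness hypothesis on $g_0$ (which does hold for the paper's numerical choice $g_0(z)=e^{-z}$, but not for all ``nice'' $g$).

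The paper's proof avoids touching $g$ entirely. It forms the difference quotient $\bigl(S_L(\ell+h,r)-S_L(\ell,r)\bigr)/h$ and substitutes $x\mapsto x+h$ in the first integral, which transfers the increment $h$ from the argument of $g$ onto the argument of $f$ and simultaneously shifts the upper limit to $(\ell-h+r)/2$. Since $f$ \emph{is} assumed continuously differentiable with bounded derivative, dominated convergence (with dominating function $\sup|f'|\cdot g(|\ell-x|)$, integrable by the decay of $g$) handles the integrand term, and continuity of $f$ and $g$ handles the shrinking boundary strip, yielding (\ref{eq:partial_S_L_partial_ell}) with no hypothesis on $g$ beyond continuity and integrability. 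If you want to salvage your route, either add the hypothesis $g_0\in C^1$ with $g_0'$ integrable, or note that the real content is ``move the derivative onto $f$'' and do that move at the level of difference quotients, as the paper does, rather than via $g'$ and integration by parts.
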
 
\begin{proof} By definition,
\begin{eqnarray}
\nonumber
S_L(\ell,r) &=& \int_{-\infty}^{(\ell+r)/2} f(x) g(|\ell-x|) dx.
\end{eqnarray}

\noindent
Therefore for $h>0$ with $\ell+h <  r$:
\begin{eqnarray*} 
&~& 
\frac{S_L(\ell+h,r) - S_L(\ell,r)}{h} =   \\
&~&
 \frac{1}{h} \left( \int_{-\infty}^{(\ell+h+r)/2} f(x) g(|\ell+h-x|) dx - 
\int_{-\infty}^{(\ell+r)/2} f(x) g(|\ell-x|) dx \right) =  \\
&~&
 \frac{1}{h} \left( \int_{-\infty}^{(\ell-h+r)/2} f(x+h) g(|\ell-x|) dx - 
\int_{-\infty}^{(\ell+r)/2} f(x) g(|\ell-x|) dx \right) =  \\
&~&
 \int_{-\infty}^{(\ell+r)/2} \frac{f(x+h)-f(x)}{h} g(|\ell-x|) dx - \frac{1}{h} \int_{(\ell-h+r)/2}^{(\ell+r)/2} f(x+h) g(|\ell-x|) dx.
 \end{eqnarray*} 
 Using the dominated convergence theorem, remembering  that $f'$ is bounded and $g$ is integrable, and 
 also using that both $f$ and $g$
 are 
 continuous, 
 (\ref{eq:partial_S_L_partial_ell}) now follows by taking the limit as $h \rightarrow 0$. Then (\ref{eq:partial_S_L_partial_ell0})
 follows with the substitution $ \frac{\ell-x}{\gamma}=u$. 
 
 The derivations of (\ref{eq:partial_S_R_partial_r}) and 
 (\ref{eq:partial_S_R_partial_r0}) are analogous.
\end{proof}

\subsection{As voter loyalty decreases, centrism becomes non-optimal eventually.}
We now prove that for sufficiently small $\gamma$, that is, for sufficiently disloyal voters, it will not be 
in $L$'s best interest to come arbitrarily close to $R$.

\begin{proposition}
\label{prop:R_repels}
For sufficiently small $\gamma$, 
$$
\frac{\partial S_L}{\partial \ell}(r,r) < 0.
$$
\end{proposition}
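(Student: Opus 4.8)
The plan is to read off the sign of $\partial S_L/\partial\ell(r,r)$ directly from the explicit formula (\ref{eq:partial_S_L_partial_ell0}), by letting $\gamma\to 0^+$: the ``boundary term'' $-\frac12 f\big((\ell+r)/2\big)\,g_0\big((r-\ell)/(2\gamma)\big)$ survives the limit and is strictly negative, while the remaining integral term is of order $\gamma$ and hence negligible for small $\gamma$.

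First I would substitute $\ell=r$ into (\ref{eq:partial_S_L_partial_ell0}). Strictly, the Lemma is stated only for $\ell<r$, so I would first note that the right-hand side of (\ref{eq:partial_S_L_partial_ell0}) depends continuously on $\ell$ up to and including $\ell=r$: the lower limit $-(r-\ell)/(2\gamma)$ varies continuously, and since $f'$ is bounded and $g_0$ is integrable, dominated convergence allows $\ell\to r^-$ to be taken inside the integral. Evaluating at $\ell=r$ therefore yields the (one-sided) value $\partial S_L/\partial\ell(r,r)$. At $\ell=r$ we have $(\ell+r)/2=r$, $(r-\ell)/(2\gamma)=0$, $g_0(0)=1$, and the integral runs from $0$ to $\infty$, so
\[
\frac{\partial S_L}{\partial\ell}(r,r) \;=\; -\frac12 f(r)\;+\;\gamma\int_0^\infty f'(r-\gamma u)\,g_0(u)\,du .
\]

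Next I would estimate the integral. Let $M:=\sup_{x\in\R}|f'(x)|$, which is finite by the niceness assumptions on $f$. Using $g_0\ge 0$ together with $\int_0^\infty g_0(u)\,du=1$,
\[
\left|\,\gamma\int_0^\infty f'(r-\gamma u)\,g_0(u)\,du\,\right|\;\le\;\gamma M\int_0^\infty g_0(u)\,du\;=\;\gamma M ,
\]
so $\partial S_L/\partial\ell(r,r)\le -\frac12 f(r)+\gamma M$. Since $f(r)>0$, any $\gamma<f(r)/(2M)$ (any $\gamma>0$ at all, in the degenerate case $M=0$) makes the right-hand side strictly negative, which is the assertion.

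There is no genuinely hard step here; the only points needing a little care are (i) justifying the use of formula (\ref{eq:partial_S_L_partial_ell0}) at the endpoint $\ell=r$, where the Lemma was stated for $\ell<r$, and (ii) observing that the admissible threshold for $\gamma$ depends on $r$ (through $f(r)$ and $M=\sup|f'|$), so the statement is to be read for each fixed $r$ and not uniformly in $r$ — which is all that is needed for what follows.
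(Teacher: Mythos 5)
Your proposal is correct and follows essentially the same route as the paper: substitute $\ell=r$ into (\ref{eq:partial_S_L_partial_ell0}) (interpreted as the limit $\ell\to r^-$), identify the boundary term $-\tfrac12 f(r)$, and show the remaining integral term is $O(\gamma)$. The only cosmetic difference is that you bound the integral by $\gamma\sup|f'|$ directly, where the paper invokes dominated convergence to identify its limit as $f'(r)$; your version is marginally more elementary and yields an explicit (non-uniform in $r$) threshold for $\gamma$, but the argument is the same.
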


\begin{proof}
Set $\ell=r$ in equation (\ref{eq:partial_S_L_partial_ell0}) (that is, take the limit of $\frac{\partial S_L}{\partial \ell}(\ell,r)$ as $\ell \rightarrow r$): 
$$
\frac{\partial S_L}{\partial \ell}(r, r) = - \frac{f(r)}{2}  + \gamma \int_{0}^\infty f'(r- \gamma u) g_0(u) ~\! du.
$$
By the dominated convergence theorem, using the continuity of $f'$ and the assumption that $\int_0^\infty g_0(z) dz = 1$, 
$$
\lim_{\gamma \rightarrow 0} 
\int_{0}^\infty f'(r-\gamma u) g_0(u) du  = f'(r).
$$
Therefore 
$\hskip 50pt
\displaystyle{\frac{\partial S_L}{\partial \ell}(r, r) = - \frac{f(r)}{2} + O(\gamma) < 0 ~~~ \mbox{as $\gamma \rightarrow 0$}.}
$
\end{proof}
\begin{figure}
\bc
\ig[scale=0.5]{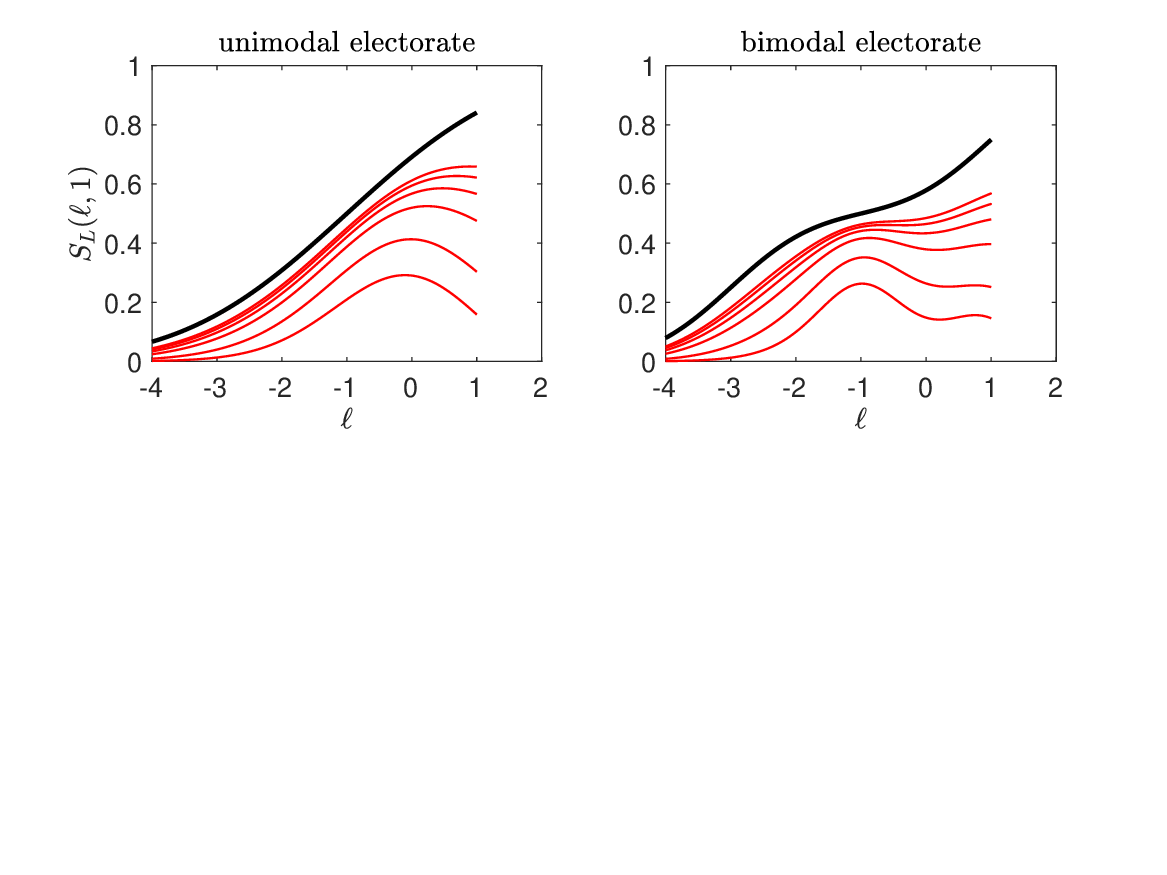}
\caption{$S_L(\ell,{ r=1})$ as a function of $\ell$, with the parameter $\gamma$  set to $\infty$ (black), $5$, $4$, 3, 2, 1, and 0.5. For fixed $\ell$, $S_L(\ell)$ decreases as $\gamma$ decreases.}
\label{fig:SL_WITH_GAMMA}
\ec
\end{figure}

\subsection{When voter loyalty is low, it's best to be positioned where there are many voters.}

This, too, is not a surprise of course, but we will verify here that our model predicts it. 
As an example, we plot $S_L(\ell,1)$ as a function of $\ell<1$
in 
Figure \ref{fig:SL_WITH_GAMMA}.  As $\gamma$ decreases, $S_L$ decreases, and
eventually loses its monotonicity as a function of $\ell$.
It turns out that in fact, in the limit as $\gamma \rightarrow 0$, the graph of $S_L(\ell,r)$ always mimics that
of $f$, up to scaling. 
(A hint of that is visible in Figure \ref{fig:SL_WITH_GAMMA}.) 
This is the content of the following proposition.

\begin{proposition}
\label{prop:SL_shadows_f} Let  $\ell$ and $r$ with $\ell<r$ be fixed. As $\gamma \rightarrow 0$,
\begin{equation}
\label{eq:S_L_asymptotics}
S_L(\ell, r) \sim 2 \gamma f(\ell) 
\end{equation}
and 
\begin{equation}
\label{eq:S_L_prime_asymptotics}
\frac{\partial S_L}{\partial \ell}(\ell, r)  \sim 2 \gamma  ~\! f'(\ell).
\end{equation}
\end{proposition}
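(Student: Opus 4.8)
The plan is to deduce both asymptotics from a single dominated-convergence computation applied to the same rescaled integral that already appears in equation (\ref{eq:partial_S_L_partial_ell0}). First I would rescale $S_L$ itself in the same fashion: substituting $x = \ell - \gamma u$ in (\ref{eq:tired}) and using $g(z) = g_0(z/\gamma)$ gives
$$
S_L(\ell, r) = \gamma \int_{-(r-\ell)/(2\gamma)}^{\infty} f(\ell - \gamma u)\, g_0(|u|)\, du ,
$$
so that $S_L(\ell,r)/(2\gamma)$ is one half of the integral on the right. The only $\gamma$-dependence there lives in the integrand $f(\ell - \gamma u)$ and in the lower limit $-(r-\ell)/(2\gamma)$, which tends to $-\infty$ because $r - \ell > 0$.

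To take the limit I would rewrite the integral over the fixed domain $\R$ as $\int_{\R} f(\ell - \gamma u)\, g_0(|u|)\, \mathbf{1}_{\{u \geq -(r-\ell)/(2\gamma)\}}\, du$. For each fixed $u$ the indicator equals $1$ once $\gamma$ is small enough, and $f(\ell - \gamma u) \to f(\ell)$ by continuity of $f$, so the integrand converges pointwise to $f(\ell)\, g_0(|u|)$; and since $f$ is bounded it is dominated by $\|f\|_\infty\, g_0(|u|)$, which is integrable on $\R$ because $\int_{-\infty}^{\infty} g_0(|u|)\, du = 2 \int_0^{\infty} g_0(z)\, dz = 2$ by assumptions 3 and 4 on $g_0$. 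The dominated convergence theorem then yields $S_L(\ell,r)/(2\gamma) \to \frac{1}{2} f(\ell) \cdot 2 = f(\ell)$, and since $f(\ell) > 0$ (assumption 2 on $f$) this is exactly (\ref{eq:S_L_asymptotics}).

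For (\ref{eq:S_L_prime_asymptotics}) I would start from (\ref{eq:partial_S_L_partial_ell0}) and treat its two terms separately. The boundary term $-\frac{1}{2} f\!\left(\frac{\ell+r}{2}\right) g_0\!\left(\frac{r-\ell}{2\gamma}\right)$ is $O(\gamma^2)$: $f$ is bounded, and by assumption 3 on $g_0$ we have $g_0\!\left(\frac{r-\ell}{2\gamma}\right) = O\!\left(\gamma^2/(r-\ell)^2\right)$ as $\gamma \to 0$. The second term is $\gamma$ times precisely the integral handled above, but with $f'$ in place of $f$; repeating the dominated-convergence argument verbatim — now using that $f'$ is continuous and bounded — shows that this term equals $2\gamma f'(\ell) + o(\gamma)$. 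Adding the two pieces gives $\partial S_L/\partial \ell(\ell,r) = 2\gamma f'(\ell) + o(\gamma)$, which is (\ref{eq:S_L_prime_asymptotics}) (read as an equality up to $o(\gamma)$ in the degenerate case $f'(\ell) = 0$).

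I expect the one genuine subtlety to be the $\gamma$-dependent lower limit of integration; the rest is bookkeeping. Folding that limit into an indicator on the fixed domain $\R$, and observing that it escapes to $-\infty$ so the indicator is eventually $1$ at every fixed point, is what makes a single clean application of dominated convergence suffice for both estimates here — indeed, it is the same device already used implicitly in the proofs of the Lemma and of Proposition \ref{prop:R_repels}.
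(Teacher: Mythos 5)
Your proposal is correct and follows essentially the same route as the paper: the same rescaling substitution $x=\ell-\gamma u$, the same dominated convergence argument with the lower limit escaping to $-\infty$, and the same $O(\gamma^2)$ bound on the boundary term via $g_0(z)\leq O(1/z^2)$. The only additions are your explicit indicator-function device for handling the moving limit of integration and the remark about reading $\sim$ as equality up to $o(\gamma)$ when $f'(\ell)=0$, both of which are sound refinements of what the paper leaves implicit.
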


\begin{proof} Using $g(z) = g_0(z/\gamma)$ and the substitution $(\ell-x)/\gamma=u$, we obtain
\begin{eqnarray}
\nonumber
S_L(\ell,r) &=& \int_{-\infty}^{(\ell+r)/2} f(x) g(|\ell-x|) dx 
\\ \label{stop} &=&  \gamma \int_{-(r-\ell)/(2 \gamma)}^\infty f(\ell-u \gamma) g_0(|u|) du.
\end{eqnarray}
Using the dominated convergence theorem and remembering that $f$ is bounded and continuous and $g_0$ is integrable 
with $\int_0^\infty g_0(z) dz = 1$ and therefore $\int_{-\infty}^\infty g_0(|u|) du =2$, we see that the integral in (\ref{stop}) converges to $2  f(\ell)$ as $\gamma \rightarrow 0$.
This implies (\ref{eq:S_L_asymptotics}). 

The proof of (\ref{eq:S_L_prime_asymptotics}) is based on
(\ref{eq:partial_S_L_partial_ell0}).
Since $g_0(z) \leq O(1/z^2)$ as $z \rightarrow \infty$, we have 
$$
\frac{1}{2} f \left( \frac{\ell+r}{2} \right) g_0 \left( \frac{r-\ell}{2 \gamma} \right)  \leq O(\gamma^2) ~~~\mbox{as $\gamma \rightarrow 0$}.
$$
Using the dominated convergence theorem and remembering that $f'$ is bounded and continuous, and $g_0$ is integrable,
$$
\lim_{\gamma \rightarrow 0} 
 \int_{-(r-\ell)/(2 \gamma)}^\infty f'(\ell-\gamma u) g_0(|u|) du = f'(\ell) \int_{-\infty}^\infty g_0(|u|) du = 2 f'(\ell).
 $$
This implies (\ref{eq:S_L_prime_asymptotics}). 
 \end{proof}

If $L$ starts out on the left, and opportunistically changes positions following
equation (\ref{eq:ODE1}), regions where $\frac{\partial S_L}{\partial \ell}<0$ will prevent $L$ from moving  right. This confirms again that the candidate positions won't come together when $\gamma$ is small enough.

A subtlety may be worth pointing out here. By Proposition \ref{prop:R_repels}, it is {\em always} true that for $\ell$ near $r$, $\frac{\partial S_L}{\partial \ell}(\ell, r)$ will
be negative when $\gamma$ is small. From  (\ref{eq:S_L_prime_asymptotics}), you might 
be tempted to conclude that that can't be 
true if $f'(r)>0$. 
That conclusion, however, is false because the
convergence of $\frac{1}{\gamma} \frac{\partial S_L}{\partial \ell}(\ell,r)$ to $2 f'(\ell)$  is not uniform in $\ell$.

\section{The shift in optimal candidate positions can be abrupt.} ~

\subsection{Blue sky bifurcations.}

When the continuous variation of a parameter in a system of ordinary differential equations causes 
a saddle and a stable node to collide, resulting in the annihilation of both equilibria, we say that the system undergoes a 
{\em saddle-node bifurcation.} When the parameter is varied in the reverse direction, two fixed points --- a stable node
and a saddle --- are suddenly created,  they appear ``out of the blue." Viewed in this direction, the bifurcation is therefore poetically called a  {\em blue sky 
bifurcation} \cite{abraham_shaw}.
We refer the reader to \cite{strogatz_book} for an excellent discussion of blue sky bifurcations.

\subsection{Opportunistic left-winger,  ideologically fixed right-winger.} 
Assume $\beta=0$,  
so $R$ is not at all opportunistic, and $r=1$ is fixed. 
The left-wing candidate moves according to 
\begin{equation}
\label{eq:one_ODE}
\frac{d \ell}{dt} =  \alpha \frac{\partial S_L}{\partial \ell}(\ell,1).
\end{equation}
This is a natural situation 
to think about. In fact, even when $r$ is {\em not} fixed, it seems reasonable to 
expect that $L$ will operate {\em as though} it were; otherwise $L$ would have to think around corners,
trying to anticipate $R$'s opportunistic moves while contemplating their own.

For various values of $\gamma$, we showed the right-hand side of equation (\ref{eq:one_ODE}), up to the
factor $\alpha$,  in Figure \ref{fig:SL_WITH_GAMMA} already.
There is a significant difference between the two panels of Figure 3. In the left panel, for the unimodal electorate,
negative slopes first appear in the vicinity of $r=1$ as $\gamma$ is lowered, and the (unique) maximum of $S_L$ gradually 
moves towards $0$, starting at $r=1$. In the right panel, for the bimodal electorate, negative slopes first appear much further to the  left, and a local maximum first appears near $\ell=-1$.

Figure \ref{fig:SL_WITH_GAMMA_DERIVATIVES} shows $\partial S_L/\partial \ell$ as a function of $\ell < r=1$, for various different values of $\gamma$. 
{ In the left panel, 
the first negative values of $\partial S_L/\partial \ell$, as $\gamma$ decreases, occur near $\ell=r=1$ (the fixed position of the right-wing candidate). A fixed point of the
equation $\frac{d \ell}{dt} =  \frac{\partial S_L}{\partial \ell}$ is created at $\ell=r=1$ and gradually moves left.}

 In the 
right panel of Figure \ref{fig:SL_WITH_GAMMA_DERIVATIVES}, as 
$\gamma$ decreases, the graph of $\partial S_L/\partial \ell$ pushes through the $\ell$-axis, creating a pair
of fixed points of the differential equation $\frac{d \ell}{dt} =  \frac{\partial S_L}{\partial \ell}$  in a blue sky bifurcation that occurs at some critical value $\gamma=\gamma_c$, in our example slightly above 4. We show a close-up of the right panel of Figure \ref{fig:SL_WITH_GAMMA_DERIVATIVES} again in Figure \ref{fig:SL_WITH_GAMMA_DERIVATIVES_CLOSEUP}, with the pair
of fixed points indicated. 
\begin{figure}[ht]
\bc
\ig[scale=0.5]{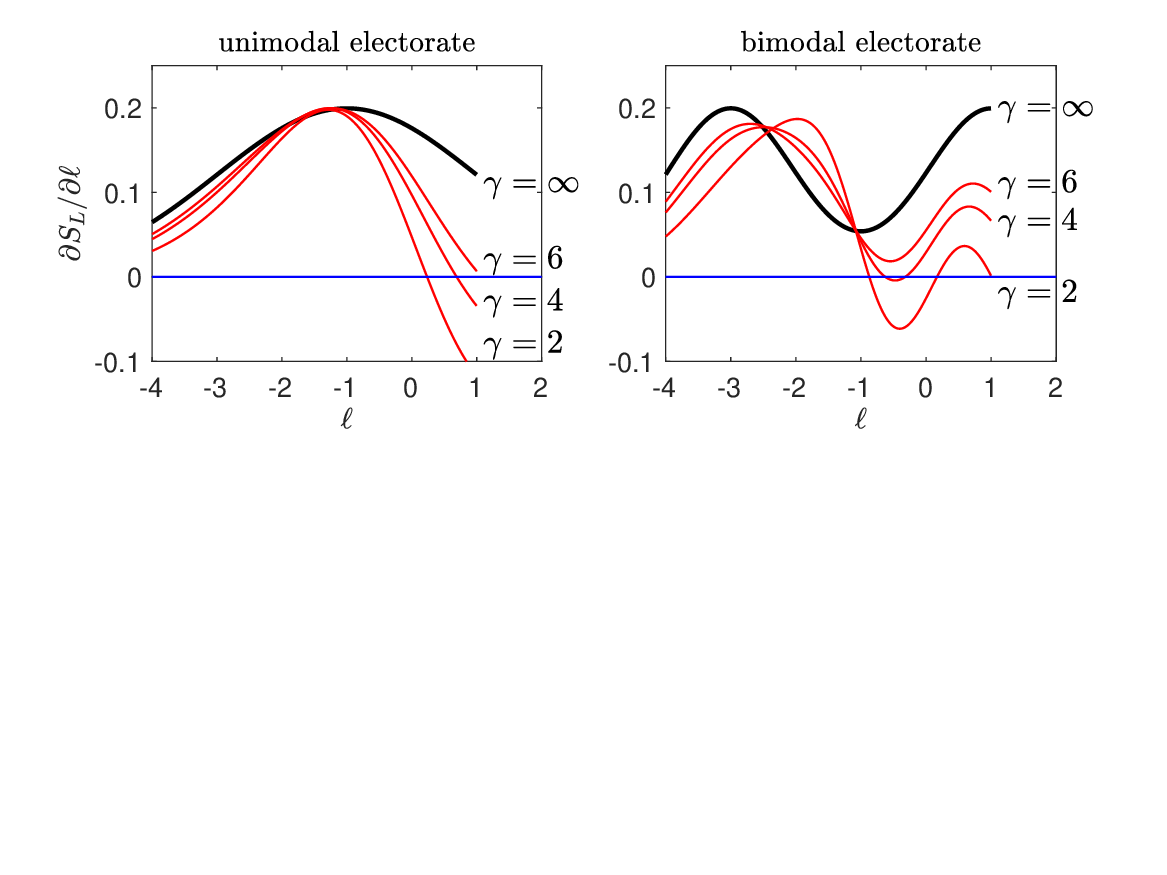}
\caption{$ \partial S_L/\partial \ell$ as a function of $\ell$, with $r=1$ fixed, and with the parameter $\gamma$  set to $\infty$ (black), 6, 4, and 2.}
\label{fig:SL_WITH_GAMMA_DERIVATIVES}
\ec
\end{figure}
\begin{figure}[ht]
\bc
\ig[scale=0.3]{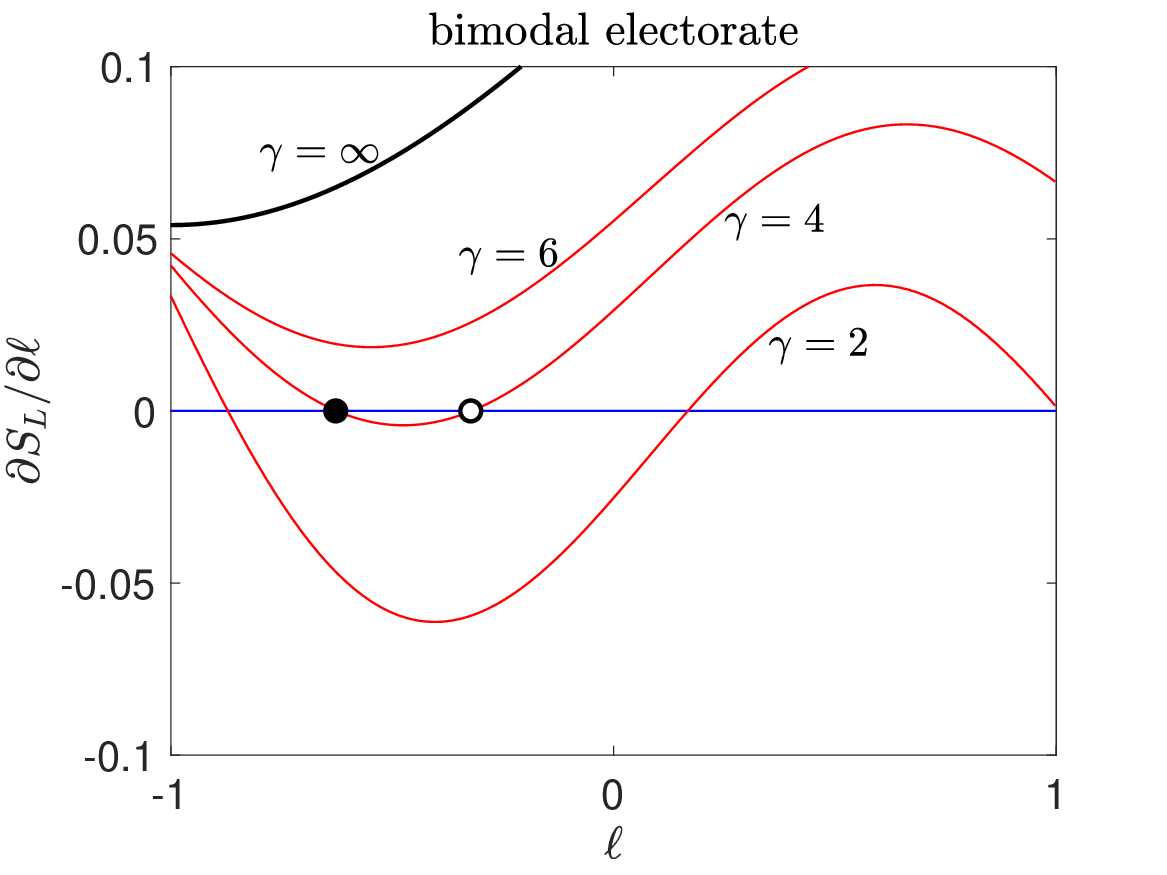}
\caption{Close-up of Figure \ref{fig:SL_WITH_GAMMA_DERIVATIVES}, right panel. For $\gamma=4$, the stable fixed point is indicated as a solid circle, and the 
unstable fixed point as an open circle.}
\label{fig:SL_WITH_GAMMA_DERIVATIVES_CLOSEUP}
\ec
\end{figure}

In Figure \ref{fig:DISCONTINUITY} we show, { for the bimodal electorate,}  $L$'s final position
$$
\ell_\infty = \min(\lim_{t \rightarrow \infty} \ell(t) ,1) 
$$
as a function of $\gamma$. 
If $\ell(t)$ were to rise above $1$, the assumptions under which our model was formulated would become invalid; we define
$\ell_\infty$ to be 1 in this case.
As $\gamma$ crosses $\gamma_c$, the value of $l_\infty$ jumps from a negative value, namely the location 
of the blue sky bifurcation on the $\ell$-axis, to $r=1$.
\begin{figure}[h!]
\bc
\ig[scale=0.35]{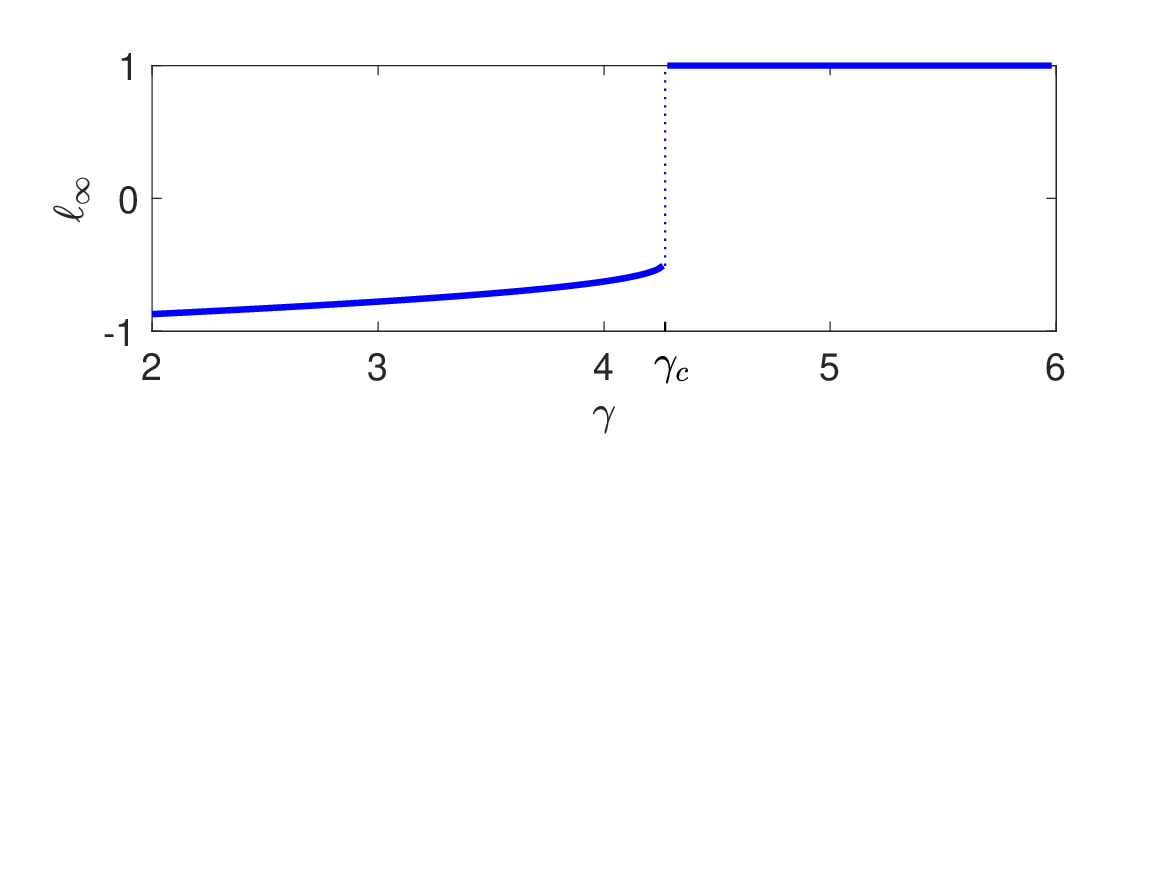}
\caption{$L$'s final position, assuming that $L$ starts out at $\ell=-1$ and follows the 
equation $\frac{d \ell}{dt} = \frac{\partial S_L}{\partial \ell}(\ell,1)$ { (so $\alpha=1$, $\beta=0$),} stopping when it reaches $\ell=1$.}
\label{fig:DISCONTINUITY} 
\ec
\end{figure}

\subsection{Two examples of two competing opportunists.} In Figure \ref{fig:TWO_ODES}, we show numerically computed solutions of 
equations (\ref{eq:ODE1}) and (\ref{eq:ODE2}) with $\alpha=1$, $\beta=0.1$, and $\gamma=3.78$ or $\gamma=3.8$.
As $\gamma$ rises from 3.78 to 3.8, there is an abrupt shift in $L$'s behavior: For $\gamma=3.78$, $L$ stays on the left. (Notice that $L$ appears
to consider moving to the center, then rejects it.) 
For $\gamma=3.8$, $L$ moves all the way to the right to meet $R$. The transition to coalescence is not always
 abrupt, as illustrated in our second example, Figure \ref{fig:TWO_ODES_2}.

\begin{figure}[ht]
\bc
\ig[scale=0.4]{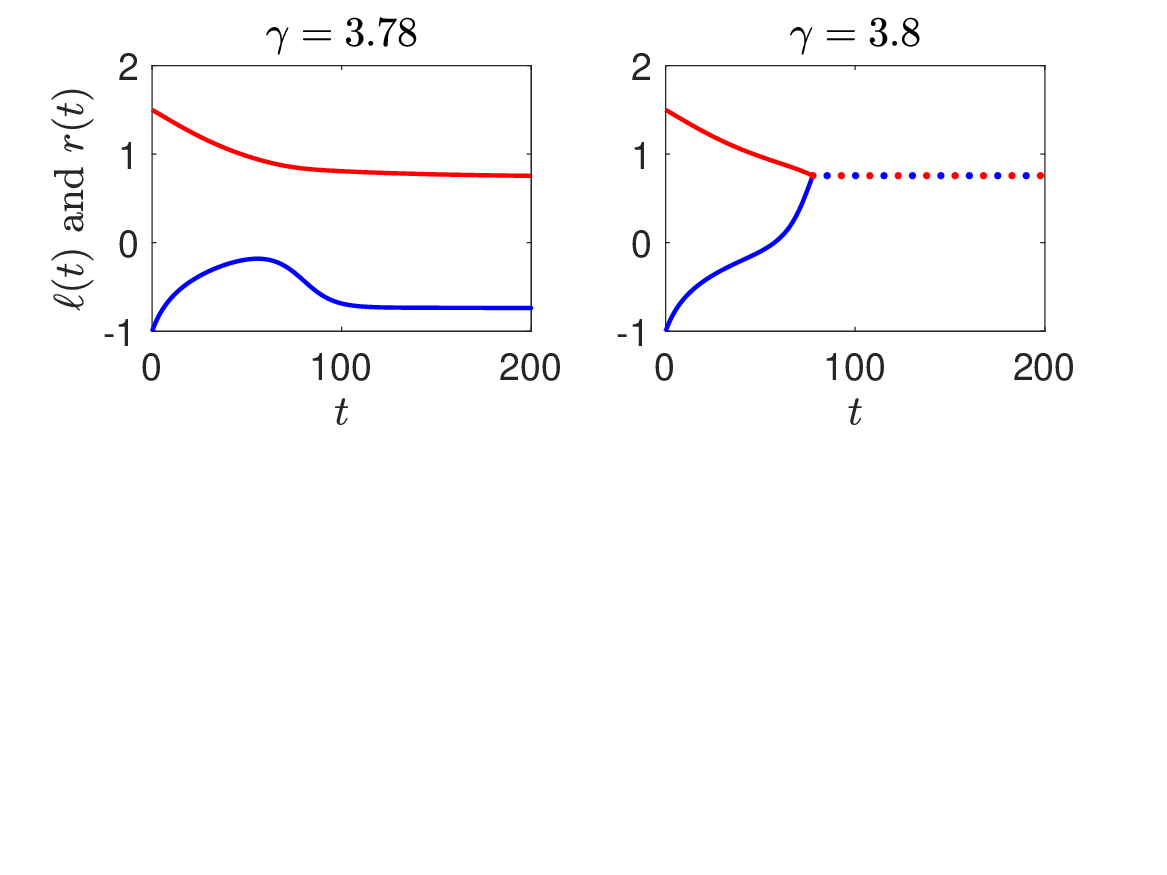}
\caption{Solutions of equations (\ref{eq:ODE1}) and (\ref{eq:ODE2}) with the bimodal $f$, and with $\alpha=1$, $\beta=0.1$, and $\gamma=3.78$ (left) 
and 3.8 (right). The very slight increase in $\gamma$ causes an abrupt shift in $L$'s final position.}
\label{fig:TWO_ODES}
\ec
\end{figure}
\begin{figure}[ht]
\bc
\ig[scale=0.4]{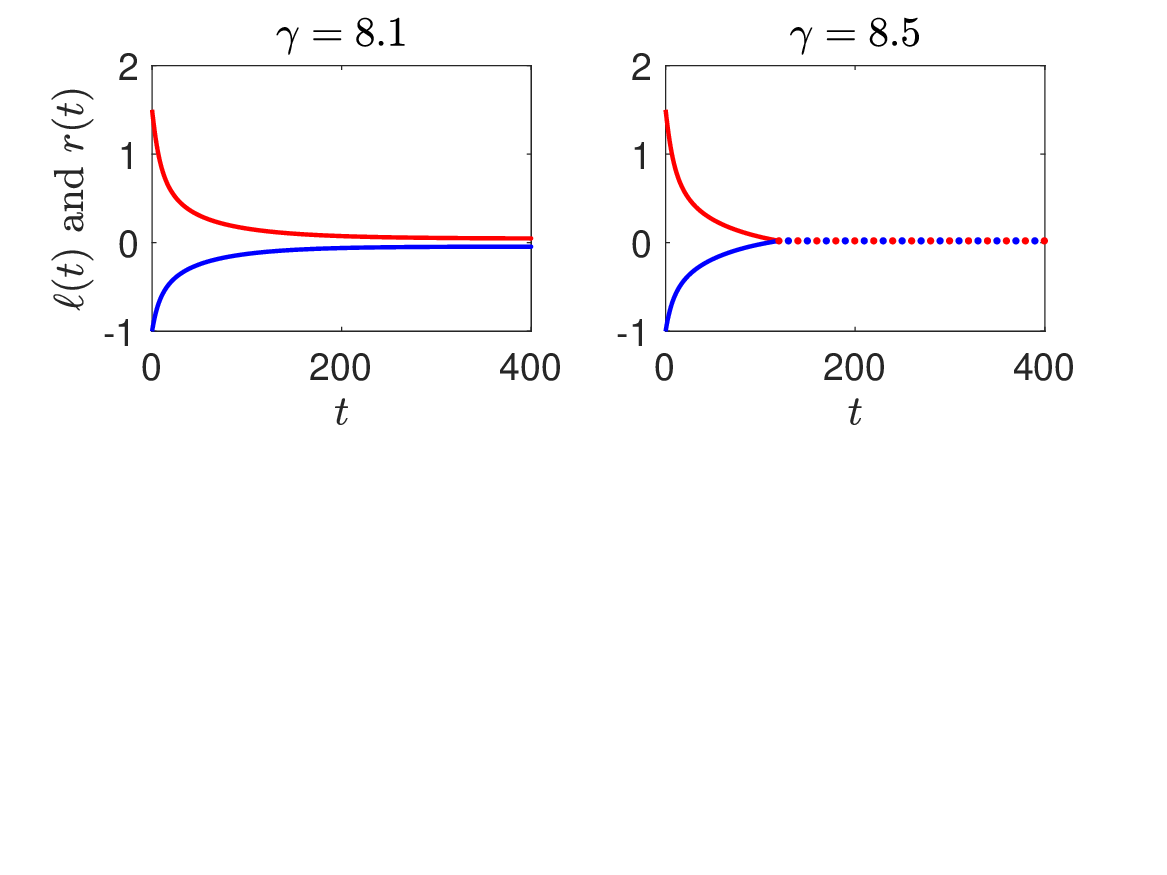}
\caption{Solutions of equations (\ref{eq:ODE1}) and (\ref{eq:ODE2}) with the bimodal $f$, and with $\alpha=1$, $\beta=1$, and $\gamma=8.1$ (left) 
and 8.5 (right).}
\label{fig:TWO_ODES_2}
\ec
\end{figure}

\subsection{Parameter space exploration.} We have  explained why the optimal position of $L$ depends discontinuously on $\gamma$
when $\beta=0$, and the voter density is bimodal. 
The larger question, of course, is how the appearance of discontinuities depends on the parameters $\alpha$ and $\beta$, and on
the voter density $f$. We have no general answer to this question. Our intent in this brief article is merely to show the {\em possibility} 
of discontinuous behavior, not to analyze broadly when it is seen.

We do, however, have pertinent numerical results. Let $\alpha>0$ and $\beta>0$, and let $\ell$ and $r$ denote the solutions of equations (\ref{eq:ODE1}) and (\ref{eq:ODE2}) with $\ell(0)=-1.5$ and $r(0)=1.5$. We make the convention that $\ell$ and $r$ freeze the moment they
come together. 
Define
$$
\ell_\infty = \lim_{t \rightarrow \infty} \ell(t), ~~~~ r_\infty=\lim_{t \rightarrow \infty} r(t), ~~~~q_\infty=r_\infty-\ell_\infty.
$$
Then $q_\infty$ is a function of $\alpha, \beta, \gamma$, and of course of the voter density $f$. 
However, scaling time amounts to scaling $\alpha$ and $\beta$ by the same factor.  Therefore 
$q_\infty$ is in fact, for a given $f$, only a function of $\beta/\alpha$ and $\gamma$. We therefore fix $\alpha=1$ and consider $q_\infty$ a function of $\beta$ and $\gamma$.

In Figure \ref{fig:Q_INFTY}, we show the graph 
of $q_\infty(\beta,\gamma)$ for the unimodal and 
bimodal electorates. For the unimodal electorate, $q_\infty$ is  zero for large $\gamma$, positive for
 small $\gamma$, and there is no discontinuity.
 For the bimodal electorate, 
a discontinuity appears, for an intermediate range of values of $\gamma$, and a range of values of $\beta$ between $0$ and approximately 0.8.
\begin{figure}[ht]
{ 
\bc
\ig[scale=0.6]{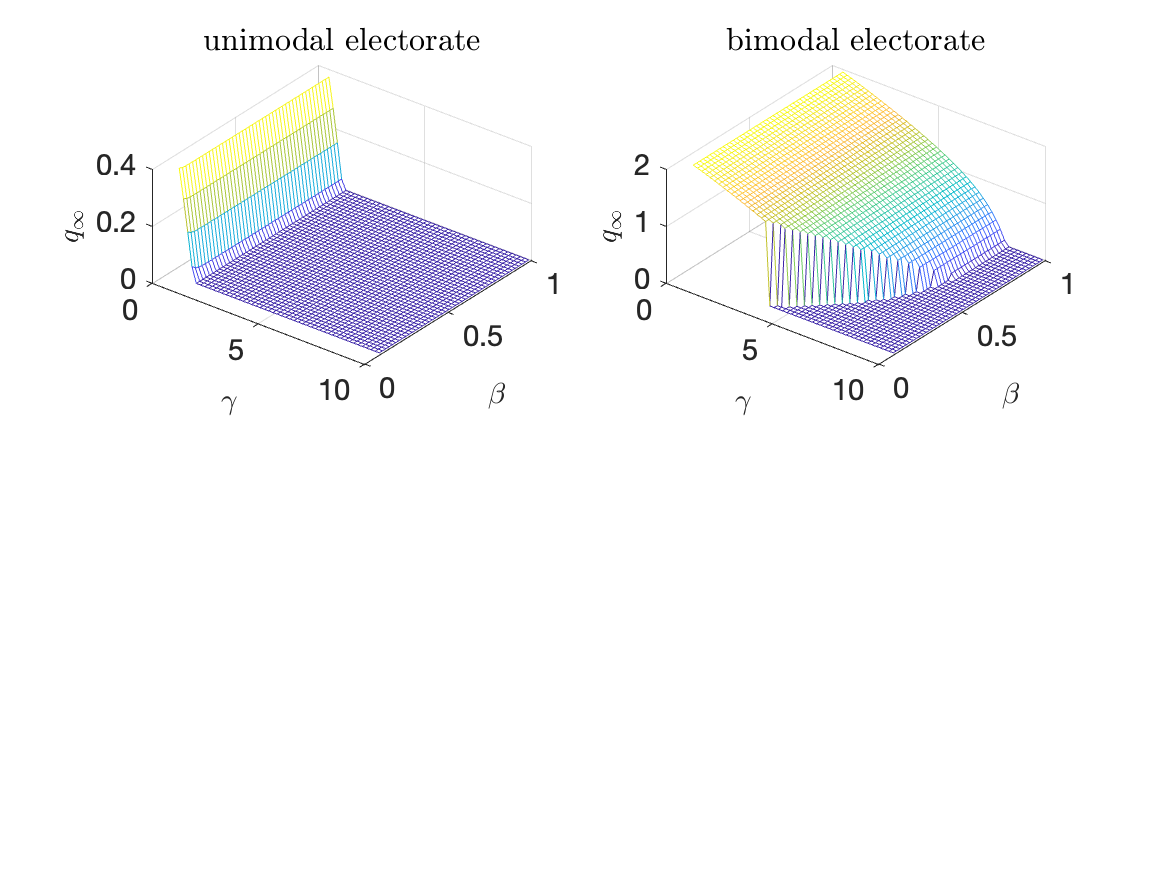}
\caption{ The distance between the final positions of $L$ and $R$, after completion of the optimization
process, as a function 
of $\beta$ ($\alpha$ is assumed to be 1) and $\gamma$.}
\label{fig:Q_INFTY}
\ec
}
\end{figure}
In fact, as long as $q_\infty>0$ (so the two candidates' positions never meet each other), $r_\infty$ and $l_\infty$, and therefore also $q_\infty$, 
are independent of $\alpha$ and $\beta$. They
are solutions of the system 
$$
\frac{\partial S_L}{\partial \ell}(\ell, r) = 0, ~~~~\frac{\partial S_R}{\partial r}(\ell, r) = 0, 
$$
which doesn't involve $\alpha$ and $\beta$; see equations (\ref{eq:ODE1}) and (\ref{eq:ODE2}). There is no  $\beta$-dependence in Fig.\ \ref{fig:Q_INFTY}, 
{\em except} at the discontinuity.

\section{Concluding remarks.}

\subsection{Related literature.} 
Ideas similar to ours have very recently been proposed by Jones {\em et al.} \cite{Jones_et_al_on_abstention}. { In detail, the modeling in 
\cite{Jones_et_al_on_abstention} differs from ours. For example, Jones {\em et al.} assume that voters are {\em less likely} to vote
for a candidate who is further away from them in political views than another candidate, but {\em may} still vote for that candidate. 
Our equations (\ref{eq:ODE1}) and (\ref{eq:ODE2}) reduce to  \cite[equation 8]{Jones_et_al_on_abstention} when $\alpha$ and $\beta$ are set to 1. It is precisely the difference
between $\alpha$ and $\beta$, however --- the difference in willingness to shift position opportunistically --- which makes
the discontinuities possible; see the right panel of Figure \ref{fig:Q_INFTY}.}

{The model of Siegenfeld and Bar-Yam \cite{siegenfeld_bar-yam} differs from ours as well, but the results bear striking resemblance. Siegenfeld and Bar-Yam study the parameter dependence of the opinion that best represents the electorate, meaning --- in our notation --- the $y$ that maximizes $\int_{-\infty}^\infty f(x) g(|y-x|) dx$. They find that $y$ can depend discontinuously on the  strengths of the ``left" and ``right" camps, assuming a bimodal electorate and
taking abstentions into consideration.

{ For other examples of discontinuities linked to 
blue sky bifurcations in models of human behavior, less 
closely related to our work, see  \cite{Couzin_et_al} and \cite{Tuzon_et_al}.
}

\subsection{Future work.} 

{ 
Several aspects of our model call for further analysis. A  complete bifurcation analysis
of equations (\ref{eq:ODE1}) and (\ref{eq:ODE2}) would be of interest. The parameters are $\alpha$, $\beta$, and $\gamma$; 
as noted earlier, $\alpha$ can be set to 1 without loss of generality. It would also be interesting to explore the dependence 
on $f$ and $g_0$.}

Our model paints an idealized picture of a far more complex reality. For instance, there is a distinction between 
ideology and party allegiance. Some Republicans in the United States hold liberal views, and some Democrats
hold conservative views \cite{Pew}. In addition, it is not  unheard of for a single person to hold
liberal views on some issues, and conservative ones on others --- the ``left-right" description is  a crude approximation
to reality.

{ This suggests moving to higher-dimensional opinion spaces. 
If we considered even a two-dimensional
opinion space, our differential equations model would become four-dimensional, 
and chaotic dynamics might become a possibility. We note that the McKelvey chaos theorem \cite{McKelvey_1976}, too, requires 
an opinion space of dimension $\geq 2$. Moving to higher
dimensions in a model like ours would be 
 another interesting direction for future work.
Higher-dimensional fully continuous opinion dynamics were studied for instance in 
 \cite{higher_dimensional_opinion_spaces}. We note, however, that the 
phrase ``continuous opinion dynamics" does not always mean what we take it to mean; for instance in 
\cite{Lorenz_2007}, the word ``continuous" in the title
means that opinions can take on a continuum of values, not 
that opinion space and time are continuous.}

In our model, the density $f$ is fixed. In reality, opinions in the electorate evolve, and a persuasive, charismatic, or demagogic candidate can contribute
to changes in $f$. The example of the 2016 and 2020 U.\ S.\ presidential elections
suggests that even the parameter $\gamma$, or more generally the function $g$, may depend on 
candidate positions; a  hard right-wing candidate might raise participation on the left, and vice versa.
In future work we will couple candidate dynamics with the opinion dynamics in the electorate.

\subsection{A concluding thought on recent U.S.\  presidential elections.} 

In 2016, Donald Trump may have won the Presidency by realizing that many voters on the right 
would abstain unless he adopted positions similar to theirs. The same strategy failed in 2020. It is tempting to
speculate that by then, passions were running so high and therefore abstentions had become so rare that  centrism
had again become the winning strategy.
The abruptness of the shift in the optimal candidate position may have caught many Americans by surprise, making
 them open to the suggestion that the 2020 election could not have
been conducted properly. 

\vskip -20pt
~
\begin{acknowledgment}{Acklnowledgments.} We thank the 
Data Intensive Studies Center 
at Tufts University for supporting this work with a seed grant,
as well as the three very thoughtful
anonymous reviewers and the 
{\em Monthly} editorial board, whose comments improved the paper greatly. 
\end{acknowledgment}


\begin{biog}
\item[Christoph B\"orgers]  is a Professor of Mathematics at Tufts University. His research interests include mathematical neuroscience,  numerical analysis, and more recently anomalous diffusion
and opinion dynamics.  In
2022 he was the recipient of Tufts University's Leibner Award for Excellence in Teaching and Advising.
\begin{affil}
Department of Mathematics, Tufts University, Medford, MA 02155 \\
christoph.borgers@tufts.edu
\end{affil}

\item[Bruce Boghosian] is a Professor of Mathematics at Tufts University, with secondary appointments in the Departments of Computer Science and Physics.  His research interests include mathematical fluid dynamics and kinetic theory, and, more recently, the application of these disciplines to problems of inequality and wealth distribution.  He has been a fellow of the American Physical Society since 2000, and a recipient of Tufts University’s Distinguished Scholar Award in 2010.

\begin{affil}
Department of Mathematics, Tufts University, Medford, MA 02155 \\
bruce.boghosian@tufts.edu
\end{affil}

\item[Natasa Dragovic]  is a Norbert Wiener Assistant Professor of Mathematics at Tufts University. Her research focuses on opinion dynamics, complex systems, and stochastic geometry. She holds a Ph.\ D.\  in Mathematics from the University of Texas at Austin.

\begin{affil}
Department of Mathematics, Tufts University, Medford, MA 02155 \\
natasa.dragovic@tufts.edu
\end{affil}

\item[Anna Haensch]  is a senior data scientist in the Tufts University Data Intensive Studies Center with a secondary appointment in the Department of Mathematics. She earned a Ph.\ D.\  in Mathematics from Wesleyan University. Her research lies at the intersection of mathematics and the social sciences, and it deals with the many ways that we can use data to make a safer and more equitable world. 

\begin{affil}
Data Intensive Studies Center, Tufts University, Medford, MA 02155 \\
anna.haensch@tufts.edu
\end{affil}

%
%
%
%
%

\end{biog}
\vfill\eject

  \end{document}